\documentclass[12pt]{amsart}
\usepackage{amsfonts,amssymb,amscd,amsmath,enumerate,color, xypic, fancybox, graphics}
%,verbatim,calc,latexsym,pstcol,pst-plot,pst-3d,}
%\usepackage[noxcolor]{pstricks}
%\input xy
%\xyoption{all}
%
%\usepackage{showkeys}

%
%------    GENERAL MACROS    -----
%
% Standard rings and fields, affine and projective space
%
               % the font for N,Z,Q,R,C

%
%------------------------------------------------
% Symbols in "Fraktur"
%
\def\frk{\mathfrak}               % font for "Fraktur"

\def\Phi{{\frk N}}
%
%------------------------------------------------
% Small letters in bold
%

%---------------------------------------------------
% Greek letters

%
\def\opn#1#2{\def#1{\operatorname{#2}}} % to make operators
%------------------------------------------------
% Numerical invariants of rings, ideals, and modules
%
\opn\chara{char} 
\opn\length{\ell} 
\opn\pd{pd} 
\opn\rk{rk}
\opn\projdim{proj\,dim} 
\opn\injdim{inj\,dim} 
\opn\rank{rank}
\opn\depth{depth} 
\opn\grade{grade} 
\opn\height{height}
\opn\embdim{emb\,dim} 
\opn\codim{codim}

\opn\Tr{Tr} 
\opn\bigrank{big\,rank}
\opn\superheight{superheight}
\opn\lcm{lcm}
\opn\trdeg{tr\,deg}%\emph{
\opn\reg{reg} 
\opn\lreg{lreg} 
\opn\ini{in} 
\opn\lpd{lpd}
\opn\size{size}
\opn\mult{mult}
\opn\dist{dist}
\opn\cone{cone}
\opn\lex{lex}
\opn\rev{rev}
%------------------------------------------------
% Divisors
%
\opn\div{div} \opn\Div{Div} \opn\cl{cl} \opn\Cl{Cl}
%
%------------------------------------------------
% Subsets of the spectrum of a ring
%
\opn\Spec{Spec} \opn\Supp{Supp} \opn\supp{supp} \opn\Sing{Sing}
\opn\Ass{Ass} \opn\Min{Min}
%
%------------------------------------------------
% Standard operations on ideals and modules
%
\opn\Ann{Ann} \opn\Rad{Rad} \opn\Soc{Soc}
%
%------------------------------------------------
% Linear algebra and homology, endo- and automorphisms
%
\opn\Syz{Syz} \opn\Im{Im} \opn\Ker{Ker} \opn\Coker{Coker}
\opn\Am{Am} \opn\Hom{Hom} \opn\Tor{Tor} \opn\Ext{Ext}
\opn\End{End} \opn\Aut{Aut} \opn\id{id} \opn\ini{in}

\opn\nat{nat}
\opn\pff{pf}%   \pf exists already
\opn\Pf{Pf} \opn\GL{GL} \opn\SL{SL} \opn\mod{mod} \opn\ord{ord}
\opn\Gin{Gin}
\opn\Hilb{Hilb}\opn\adeg{adeg}\opn\std{std}\opn\ip{infpt}
\opn\Pol{Pol}
\opn\sat{sat}
\opn\Var{Var}
\opn\Gen{Gen}

%
%------------------------------------------------
% Convexity
%
\opn\aff{aff} \opn\con{conv} \opn\relint{relint} \opn\st{st}
\opn\lk{lk} \opn\cn{cn} \opn\core{core} \opn\vol{vol}
\opn\link{link} \opn\star{star}
%------------------------------------------------
% Graded rings and Rees algebras
\opn\gr{gr}

%Tonys commands

\def\Pc{{\mathcal P}}
\def\Qc{{\mathcal Q}}

%
%------------------------------------------------
% Polynomials and power series
%

\def\pot#1#2{#1[\kern-0.28ex[#2]\kern-0.28ex]}

%
%------------------------------------------------
% Direct and inverse limits
%
\opn\dirlim{\underrightarrow{\lim}}
\opn\inivlim{\underleftarrow{\lim}}
%
%
% Names with a meaning
%

%
%------------------------------------------------
%
\let\to=\rightarrow

\def\Implies{\ifmmode\Longrightarrow \else
        \unskip${}\Longrightarrow{}$\ignorespaces\fi}
\def\implies{\ifmmode\Rightarrow \else
        \unskip${}\Rightarrow{}$\ignorespaces\fi}
\def\iff{\ifmmode\Longleftrightarrow \else
        \unskip${}\Longleftrightarrow{}$\ignorespaces\fi}

\let\:=\colon
\newtheorem{Theorem}{Theorem}[section]

\newtheorem{Corollary}[Theorem]{Corollary}
\newtheorem{Proposition}[Theorem]{Proposition}
\newtheorem{Remark}[Theorem]{Remark}

\newtheorem{Question}[Theorem]{Question}
%
% We like the var forms of some greek letters (as taught in German schools)
%
\let\epsilon\varepsilon
\let\phi=\varphi
\let\kappa=\varkappa
%
%           We print on A4 paper
%
\textwidth=15cm \textheight=22cm \topmargin=0.5cm
\oddsidemargin=0.5cm \evensidemargin=0.5cm \pagestyle{plain}
%
%           The pf environment of AMSART needs a little help
%
\def\qed{\ifhmode\textqed\fi
      \ifmmode\ifinner\quad\qedsymbol\else\dispqed\fi\fi}
\def\textqed{\unskip\nobreak\penalty50
       \hskip2em\hbox{}\nobreak\hfil\qedsymbol
       \parfillskip=0pt \finalhyphendemerits=0}
\def\dispqed{\rlap{\qquad\qedsymbol}}

%
% ------    END OF GENERAL MACROS    -------
\opn\dis{dis}
\opn\height{height}
\opn\dist{dist}
\def\pnt{{\raise0.5mm\hbox{\large\bf.}}}

\opn\Lex{Lex}
\opn\conv{conv}

%
%-- macro for local cohomology-----------------------------

%-- macro for a complicated condition for the extended
%-- Hochster's formula

%
%
%
\begin{document}
%%%%%%%%%%%%%%%%%%%%%%%%%%%%%%%%%%%%%%%%
%% title
%%%%%%%%%%%%%%%%%%%%%%%%%%%%%%%%%%%%%%%%
\title{Non-Koszul quadratic Gorenstein toric rings}
\author[K.~Matsuda]{Kazunori Matsuda}
\address[Kazunori Matsuda]{Department of Pure and Applied Mathematics,
	Graduate School of Information Science and Technology,
	Osaka University,
	Suita, Osaka 565-0871, Japan}
\email{kaz-matsuda@ist.osaka-u.ac.jp}
\subjclass[2010]{Primary 13P10, Secondary 52B20}
\keywords{Koszul algebra, quadratic algebra, Gr\"{o}bner bases, stable set polytope. }
\begin{abstract}
Koszulness of Gorenstein quadratic algebras of small socle degree is studied. 
In this note, we construct non-Koszul Gorenstein quadratic 
toric ring such that its socle degree is more than 3 by using stable set polytopes.  
\end{abstract}
\maketitle

\section*{Introduction}

Let $K$ be a field and $S = K[x_1, \ldots, x_n]$ a polynomial ring over $K$. 
Let $R = S/I$ be a standard graded $K$-algebra with respect to the grading $\deg x_i = 1$ 
for all $1 \le i \le n$, where $I$ is a homogeneous ideal of $S$. 
Let $R_{+}$ denote the homogeneous maximal ideal of $R$. 
For an $R$-module $M$, we denote $\beta_{ij}^{R}(M)$ by the $(i, j)$-th graded betti number 
of $M$ as an $R$-module. 

The Koszul algebra was originally introduced by Priddy \cite{Priddy}. 
A standard graded $K$-algebra $R$ is said to be {\em Koszul} if 
the residue field $K = R/R_{+}$ has a linear $R$-free resolution as an $R$-module, 
that is, $\beta_{ij}^{R}(K) = 0$ if $i \neq j$.   
Since $\beta_{2j}^{R}(K) = 0$ for all $j > 2$, hence 
Koszul algebras are {\em quadratic}, where $R = S/I$ is said to be quadratic 
if $I$ is generated by homogeneous elements of degree 2. 
Every quadratic complete intersection is Koszul by Tate's theorem \cite{Tate}.  
Moreover, $R = S/I$ is Koszul if I has a quadratic Gr\"{o}bner bases 
by Fr\"{o}berg's theorem \cite{Fr} and the fact that $\beta_{ij}^{R}(K) \le \beta_{ij}^{R'}(K)$ 
for all $i, j$ and for all monomial order $<$ on $S$, where $R' = S/{\rm in}_{<} (I)$. 
%Hence the Segre products and the Veronese subrings of polynomials rings are Koszul. 
%In addition, many of Koszul toric rings associated with integral convex polytopes were constructed (\cite{CFS}, \cite{Hibi1987}, \cite{HM}, \cite{HMOS}, \cite{GF}, \cite{HMT}). 
The notion of Koszul algebra has played an important role in the research 
on graded $K$-algebras,  
and various Koszul-like algebras have been introduced, e.g., universally Koszul \cite{UKoszul}, strongly Koszul \cite{HHR}, initially Koszul \cite{IKoszul}, 
sequentially Koszul  \cite{seqKoszul}, etc. 

Koszulness of toric rings of integral convex polytopes is studied. 
Let $\Pc \subset \mathbb{R}^n$ be an integral convex polytope, i.e.,  a convex polytope 
each of whose vertices belongs to $\mathbb{Z}^n$, 
and let $\Pc \cap \mathbb{Z}^n = \{ {\bf a}_1, \ldots, {\bf a}_m \}$. 
Assume that $\mathbb{Z}{\bf a}_1 + \cdots + \mathbb{Z}{\bf a}_m = \mathbb{Z}^n$. 
Let $K[X^{\pm 1}, t] := K[x_1, x_1^{-1}, \ldots, x_n, x_n^{-1}, t]$ 
be the Laurent polynomial ring in $n + 1$ variables over $K$. 
Given an integer vector ${\bf a} = (a_1, \ldots, a_n) \in \mathbb{Z}^n$, we put 
$X^{\bf a}t = x_1^{a_1} \cdots x_n^{a_n}t \in K[X^{\pm 1}, t]$. 
The {\em toric ring} of $\Pc$, denoted by $K[\Pc]$, is the subalgebra of $K[X^{\pm 1}, t]$ 
generated by $\{ X^{{\bf a}_1}t, \ldots, X^{{\bf a}_m}t \}$ over $K$. 
Note that $K[\Pc]$ can be regarded as a standard graded $K$-algebra 
by setting $\deg X^{{\bf a}_i}t = 1$. 
The {\em toric ideal} $I_{\Pc}$ is the kernel of a surjective ring homomorphism 
$\pi : K[Y] = K[y_1, \ldots, y_m] \to K[\Pc]$ defined by $\pi (y_i) = X^{{\bf a}_i}t$ 
for $1 \le i \le m$. 
Then $K[\Pc] \cong K[Y] / I_{\Pc}$. 
It is known that $I_{\Pc}$ is generated by homogeneous binomials. 

Note that the following implications hold:

\vspace{15mm}

\Ovalbox{
\begin{tabular}{c}quadratic \\ C. I.  \end{tabular}
} \hspace{2mm} $\Rightarrow$ \hspace{2mm}
\Ovalbox{
\begin{tabular}{c}quadratic \\ Gorenstein \end{tabular}
} \hspace{2mm} $\Rightarrow$ \hspace{2mm}
\Ovalbox{
\begin{tabular}{c} quadratic \\ Cohen-Macaulay \end{tabular}
}  

\vspace{3mm}

\hspace{8mm} $\Downarrow$ \cite{Tate} \hspace{75mm} $\Uparrow$ \cite{Sturmfels} and \cite{Hochster}

\vspace{2mm}

 \Ovalbox{
 \begin{tabular}{c} {\bf Koszul}  \\ {\bf algebra} \end{tabular}} \hspace{2mm} 
 $\overset{[10]}{\Leftarrow}$ \hspace{2mm} 
\Ovalbox{
\begin{tabular}{c} $I_{\Pc}$ has a \\ quadratic GB \end{tabular}
} \hspace{2mm} $\Leftarrow$ \hspace{2mm}
\Ovalbox{
\begin{tabular}{c} $I_{\Pc}$ has a quadratic  \\ squarefree initial ideal \end{tabular}
} 

\vspace{3mm}

\hspace{8mm} $\Uparrow$  \cite{seqKoszul} \hspace{34mm} $\Uparrow$ \cite{CRV}

\vspace{3mm}

\Ovalbox{
\begin{tabular}{c} sequentially  \\ Koszul \cite{seqKoszul} \end{tabular}
} 
 \hspace{2mm} $\Leftarrow$ \hspace{2mm} 
\Ovalbox{
\begin{tabular}{c} initially  \\ Koszul \cite{IKoszul} \end{tabular}
}  \hspace{8mm} C.I. = Complete Intersection 
 
\vspace{3mm}
 
\hspace{10mm} $\Uparrow$ \hspace{15mm} \rotatebox{135}{$\Rightarrow$} 
\hspace{43.4mm} GB = Gr\"{o}bner bases

\vspace{3mm}
 
\Ovalbox{
\begin{tabular}{c} strongly  \\ Koszul \cite{HHR} \end{tabular}
}  \hspace{10mm}
\Ovalbox{
\begin{tabular}{c} universally \\ Koszul \cite{UKoszul} \end{tabular}
} 

\vspace{15mm} 

In addition, it is known the following: \\

\begin{enumerate}
	\item Conca-De Negri-Rossi posed a conjecture that the defining ideal of a strongly Koszul algebra has a quadratic Gr\"{o}bner bases \cite[Question 13 (1)]{CDR}. 
	This conjecture is true for the toric ring of edge polytope \cite{HMS}, 
	order polytope \cite{HHR}, stable set polytope \cite{Matsuda} and 
	cut polytope \cite{Shibata}. \\
	\item A squarefree strongly Koszul toric ring is compressed \cite[Theorem 2.1]{MO}, 
	where $K[\Pc]$ is said to be {\em compressed} if 
	$\sqrt{{\rm in}_{<} (I_{\Pc})} = {\rm in}_{<} (I_{\Pc})$ 
	for any reverse lexicographic order $<$ on $K[Y]$. 
	In particular, a squarefree strongly Koszul toric ring is quadratic Cohen-Macaulay. \\
	\item Many of toric rings associated with integral convex polytopes 
	whose toric ideals has a quadratic Gr\"{o}bner bases are constructed 
	(e.g., \cite{CFS}, \cite{Hibi1987}, \cite{HM}, \cite{HMOS}, \cite{GF}, \cite{HMT}).
	In other words, many of Koszul toric rings 
	associated with integral convex polytopes are constructed. \\
	\item Quadratic algebra is not always Koszul 
	(see \cite[Example 2.1]{notKoszul}, \cite[Example 3]{RS}). 
	Note that both of these examples are Cohen-Macaulay but are not Gorenstein. 
	
	On the other hand, Koszulness of Gorenstein quadratic algebras is studied. 
	For a standard graded $K$-algebra $R = \oplus_{i \ge 0} R_i$ with $\dim R = d$, 
	we denote by
	\[
	H_{R}(t) = \sum_{i \ge 0} \dim_{K} R_i t^i = \frac{h_0 + h_1 t + \cdots + h_s t^s}{(1 - t)^d}
	\]
	the {\em Hilbert series} of $R$, where $h_s \neq 0$, and 
	we say that $h(R) := (h_0, h_1, \ldots, h_s)$ is the {\em h-vector} of $R$ 
	and the index $s$ is the {\em socle degree} of $R$. 	
	It is known that $h_0 = 1$ and if $R$ is Gorenstein then $h_i = h_{s - i}$ 
	for all $0 \le i \le \lfloor s/2 \rfloor$ (\cite[Theorem 4.4]{Stanley}). 
	Conca-Rossi-Valla proved that if $R$ is a quadratic Gorenstein with 
	$h(R) = (1, n, 1)$ 
	(in this case $n \ge 2$ since $R$ is quadratic) 
	then $R$ is Koszul \cite[Proposition 2.12]{CRV}. 
	
	The case for $s = 2$ is also studied.  
	Let $R$ be a quadratic Gorenstein with $h(R) = (1, n, n, 1)$ 
	(in this case $n \ge 3$ since $R$ is quadratic). 
	If $n = 3$, then $R$ is quadratic complete intersection, hence $R$ is Koszul. 
	Conca-Rossi-Valla proved that $R$ is Koszul if $n = 4$ \cite[Theorem 6.15]{CRV} 
	and Caviglia proved that $R$ is Koszul if $n = 5$ in his unpublished master thesis. 
	The case for $n \ge 6$ is still open. \\
\end{enumerate}

In this note, we focus on (4). 
In Section 1, we remark about known result of toric rings and toric ideals of stable 
set polytopes, and construct non-Koszul quadratic Gorenstein toric rings 
by using stable set polytopes. 
In Section 2, we present some questions. 

\begin{Remark}
\normalfont
In this note, we use Macaulay2 \cite{Macaulay2} to check to be not Koszul.  
About checking of non-Koszulness by using Macaulay2, see \cite[p. 289]{Four}. 
\end{Remark} 

\section{Stable set polytope and \\non-Koszul quadratic Gorenstein toric ring}

The stable set polytope is an integral convex polytope associated with 
stable sets of a simple graph. 

Let $G$ be a finite simple graph on the vertex set $[n] = \{1, 2, \ldots, n\}$ 
and let $E(G)$ denote the set of edges of $G$. 
Recall that a finite graph is {\em simple} if it possesses no loops or multiple edges. 
We denote by $\overline{G}$ the complement graph of $G$. 

Given a subset $W \subset [n]$, we define the $(0, 1)$-vector 
$\rho(W) = \sum_{i \in W} {\bf e}_i \in \mathbb{R}^n$, 
where ${\bf e}_i$ is the $i$-th unit coordinate vector of $\mathbb{R}^n$. 
In particular, $\rho(\emptyset)$ is the origin of $\mathbb{R}^n$. 

A subset $W \subset [n]$ is said to be {\em stable} if $\{i, j\} \not\in E(G)$ 
for all $i, j \in W$ with $i \neq j$. 
Note that the empty set and each single-element subset of $[n]$ are stable. 
Let $S(G)$ denote the set of all stable sets of $G$. 
The {\em stable set polytope} 
of a simple graph $G$, denoted by $\Qc_{G}$, is the convex hull of 
$\{ \rho(W) \mid W \in S(G) \}$. 
By definition, $\Qc_{G}$ is a $(0, 1)$-polytope and 
$K[\Qc_{G}] = K[t \cdot \prod_{i \in W} x_i \mid W \in S(G)] \subset K[x_1, \ldots, x_n, t]$. 
Note that $\dim K[\Qc_{G}] = n + 1$. 
Let $K[Y] = K[y_{W} \mid W \in S(G)]$ be the polynomial ring over $K$. 
Now we define a surjective ring homomorphism 
$\pi : K[Y] \to K[\Qc_{G}]$ by $\pi(y_{W}) = t \cdot \prod_{i \in W} x_i$ 
and let $I_{\Qc_{G}} = \ker \pi$. 

To state known results of the toric ring $K[\Qc_{G}]$ and the toric ideal 
$I_{\Qc_{G}}$ of the stable set polytope $\Qc_{G}$ of a simple graph $G$, 
we introduce some classes of graphs. 
About terminologies for the graph theory, see \cite{D}. 

A {\em cycle} graph with length $n$, denoted by $C_n$, is a connected graph 
which satisfies $E(C_n) = \{ \{1, 2\}, \{2, 3\}, \ldots, \{n-1, n\}, \{1, n\} \}$. 
An {\em odd cycle} is a cycle such that its length is odd. 

A graph $G$ is said to be {\em perfect} if the chromatic number of every induced subgraph 
of $G$ is equal to the size of the largest clique of that subgraph. 
A graph $G$ is perfect if and only if both $G$ and $\overline{G}$ are $(C_{2n + 3}, n \ge 1)$-free 
\cite{CRST}. 

The {\em comparability} graph $G(P)$ of a partially ordered set $P = ([n], <_P)$ is 
the graph such that $V(G(P)) = [n]$ and $\{i, j\} \in E(G(P))$ if and only if $i <_P j$ or $j <_P i$. 
A graph $G$ is said to be {\em comparability} if $G$ is the comparability graph of some 
partially ordered set. 
Forbidden induced subgraphs of comparability graphs are known (see \cite[p.13]{Mancini}). 

A graph $G$ is said to be {\em bipartite} if there exist $V_1, V_2$ with $V_1 \cup V_2 = V(G)$ 
and $V_1 \cap V_2 = \emptyset$ such that if $\{i, j\} \in E(G)$ then either 
$i \in V_1$ and $j \in V_2$ or $i \in V_2$ and $j \in V_1$. 
It is known that a graph $G$ is bipartite if and only if $G$ is $(C_{2n + 1}, n \ge 1)$-free. 

A graph $G$ is said to be {\em almost bipartite} (see \cite[p.87]{EN}) if there exists a vertex $v$ 
such that the induced subgraph $G_{[n] \setminus v}$ is bipartite.

\begin{Remark}
\normalfont

It is known that \\
\begin{enumerate}
	\item Let $G$ be a perfect graph. 
	Then $K[\Qc_{G}]$ is Gorenstein if and only if all maximal cliques of $G$ 
	have the same cardinality \cite[Theorem 2.1(b)]{Gorenstein}. \\
	\item Let $G(P)$ be the comparability graph of a partially ordered set $P$. 
	Then $K[\Qc_{G(P)}]$ is Koszul since $\Qc_{G(P)}$ is equal to the chain polytope 
	of $P$ and the toric ideal of a chain polytope has 
	a squarefree quadratic initial ideal (see \cite[Corollary 3.1]{Chain}). \\
	\item If $G$ is almost bipartite, then $K[\Qc_{G}]$ is Koszul 
	since its toric ideal $I_{\Qc_{G}}$ has 
	a squarefree quadratic initial ideal (see \cite[Theorem 8.1]{EN}). \\
	\item Let $G$ be a graph such that $\overline{G}$ is bipartite. 
	Then $K[\Qc_{G}]$ is  quadratic if and only of it is Koszul \cite[Corollary 3.4]{MOS}. \\
\end{enumerate}
\end{Remark}

Hence, if $K[\Qc_{G}]$ is quadratic but not Koszul, 
then $G$ is neither comparability nor almost bipartite, and $\overline{G}$ is 
not bipartite. 
From this fact and the classifications of these graphs, we have

\begin{Proposition}
Let $G$ be a graph on $[n]$. 
If $K[\Qc_{G}]$ is non-Koszul quadratic Gorenstein, then $n \ge 7$. 
\end{Proposition}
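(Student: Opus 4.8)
The plan is to exploit the chain of implications established in the paragraph immediately preceding the statement: if $K[\Qc_G]$ is quadratic but not Koszul, then $G$ cannot be a comparability graph (by Remark~1.2(2)), cannot be almost bipartite (by Remark~1.2(3)), and $\overline{G}$ cannot be bipartite (by Remark~1.2(4)). I would first record these three structural restrictions on $G$, and then add the Gorenstein hypothesis, which via Remark~1.2(1) forces $G$ either to be non-perfect or, if perfect, to have all maximal cliques of equal cardinality. The strategy is then a finite case analysis: for each $n$ in the range $1 \le n \le 6$, enumerate (up to isomorphism) the graphs $G$ on $[n]$ that could possibly satisfy all of these constraints, and check that none of them yields a non-Koszul quadratic Gorenstein $K[\Qc_G]$.

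Concretely, I would argue as follows. For small $n$, the number of simple graphs up to isomorphism is small (e.g.\ $11$ for $n=4$, $34$ for $n=5$, $156$ for $n=6$), and the constraints are very restrictive. First, ``$\overline{G}$ not bipartite'' means $\overline{G}$ contains an odd cycle, hence $n \ge 3$ (a triangle), and in fact the presence of an induced odd cycle plus the ``not almost bipartite'' condition pushes $n$ up further. Second, I would use the forbidden-subgraph characterizations cited in the text: a graph is not comparability iff it contains one of the known forbidden induced subgraphs (see \cite[p.13]{Mancini}), a graph is not almost bipartite iff deleting any single vertex still leaves an odd cycle, so $G$ must contain at least two vertex-disjoint odd cycles or odd cycles that cannot be simultaneously destroyed by one vertex. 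Combining ``$G$ has two odd-cycle obstructions that no single vertex kills'' with ``$\overline{G}$ has an odd cycle'' already makes $n \le 6$ untenable once one also demands that $K[\Qc_G]$ be \emph{quadratic}, since the quadratic condition on the toric ideal $I_{\Qc_G}$ is itself restrictive for these small graphs.

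The cleanest route for the final elimination is: for each candidate graph $G$ on at most six vertices passing the purely combinatorial filters above, compute the stable set polytope $\Qc_G$, its toric ideal $I_{\Qc_G}$, and check whether $I_{\Qc_G}$ is generated in degree $2$ and whether $K[\Qc_G]$ is Gorenstein (using the equal-maximal-cliques criterion when $G$ is perfect, or a direct Hilbert-series/$h$-vector symmetry check otherwise). One then verifies that in every surviving case the toric ring is actually Koszul — in practice because $I_{\Qc_G}$ turns out to have a quadratic Gröbner basis, which can be confirmed by Macaulay2 \cite{Macaulay2} as mentioned in Remark~0.1. Since no $G$ on $n \le 6$ vertices survives, any $G$ with $K[\Qc_G]$ non-Koszul quadratic Gorenstein must have $n \ge 7$.

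The main obstacle is not any single deep step but the bookkeeping: one must be confident that the enumeration of graphs on $\le 6$ vertices satisfying (not comparability) $\wedge$ (not almost bipartite) $\wedge$ ($\overline{G}$ not bipartite) $\wedge$ ($I_{\Qc_G}$ quadratic) $\wedge$ (Gorenstein) is genuinely exhausted, and that the forbidden-induced-subgraph lists for comparability graphs are applied correctly. I would organize this as a lemma-by-lemma narrowing — first bound $n$ below by $\overline{G}$ containing an odd hole, then use the almost-bipartite obstruction to force a second odd structure, then observe the Gorenstein and quadratic conditions are incompatible with all remaining small graphs — so that the computer check at the end is over a genuinely short explicit list rather than a blind search.
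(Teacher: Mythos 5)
Your proposal is correct and follows essentially the same route as the paper: the author likewise uses the three structural filters from the Remark (not comparability, not almost bipartite, $\overline{G}$ not bipartite) to reduce to a short explicit list — all graphs on $n\le 5$ are handled by comparability/$C_5$, and for $n=6$ only five connected candidates $G_1,\dots,G_5$ survive — and then eliminates each survivor by checking the $h$-vector (non-Gorenstein), a quadratic Gr\"obner basis (Koszul), or bipartiteness of the complement. The only difference is one of presentation: the paper writes out the five six-vertex graphs explicitly rather than leaving the enumeration to a computer search.
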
 
\begin{proof}
First, we assume that $n \le 5$. 
Then $G$ is a comparability graph if $G$ is not $C_5$. 
Since $C_5$ is almost bipartite, we have that $K[\Qc_{G}]$ is Koszul 
if $n \le 5$. 

Next, we assume that $n = 6$. 
If $G$ is not connected, then $G$ is a comparability graph if $G$ is not 
$C_5 \cup K_1$. 
Since $C_5 \cup K_1$ is almost bipartite, we have that $K[\Qc_{G(P)}]$ is Koszul. 

Assume that $G$ is connected. 
From the classifications of comparability and almost bipartite graphs, 
$G$ is one of the following (see \cite[p.10]{Matsuda}): \\

	\begin{xy}
		\ar@{} (0,  0); (10,  0) *\dir<4pt>{*} = "A1", 
		\ar@{-} "A1"; (6,  -8) *\dir<4pt>{*} = "B1", 
		\ar@{-} "B1"; (10,  -16) *\dir<4pt>{*} = "C1",
		\ar@{-} "C1"; (18,  -16) *\dir<4pt>{*} = "D1",
		\ar@{-} "D1"; (22,  -8) *\dir<4pt>{*} = "E1",
		\ar@{-} "E1"; (18,  0) *\dir<4pt>{*} = "F1",
		\ar@{-} "A1"; "F1",
		\ar@{-} "A1"; "E1",
		\ar@{-} "B1"; "F1",
		\ar@{} (0,  0); (36,  0) *\dir<4pt>{*} = "A2", 
		\ar@{-} "A2"; (32,  -8) *\dir<4pt>{*} = "B2", 
		\ar@{-} "B2"; (36,  -16) *\dir<4pt>{*} = "C2",
		\ar@{-} "C2"; (44,  -16) *\dir<4pt>{*} = "D2",
		\ar@{-} "D2"; (48,  -8) *\dir<4pt>{*} = "E2",
		\ar@{-} "E2"; (44,  0) *\dir<4pt>{*} = "F2",
		\ar@{-} "A2"; "F2",
		\ar@{-} "B2"; "D2",
		\ar@{-} "B2"; "F2",
		\ar@{-} "D2"; "F2",
		\ar@{} (0,  0); (62,  0) *\dir<4pt>{*} = "A3", 
		\ar@{-} "A3"; (58,  -8) *\dir<4pt>{*} = "B3", 
		\ar@{-} "B3"; (62,  -16) *\dir<4pt>{*} = "C3",
		\ar@{-} "C3"; (70,  -16) *\dir<4pt>{*} = "D3",
		\ar@{-} "D3"; (74,  -8) *\dir<4pt>{*} = "E3",
		\ar@{-} "E3"; (70,  0) *\dir<4pt>{*} = "F3",
		\ar@{-} "A3"; "F3",
		\ar@{-} "A3"; "C3",
		\ar@{-} "B3"; "E3",
		\ar@{-} "D3"; "F3",
		\ar@{} (0,  0); (88,  0) *\dir<4pt>{*} = "A4", 
		\ar@{-} "A4"; (84,  -8) *\dir<4pt>{*} = "B4", 
		\ar@{-} "B4"; (88,  -16) *\dir<4pt>{*} = "C4",
		\ar@{-} "C4"; (96,  -16) *\dir<4pt>{*} = "D4",
		\ar@{-} "D4"; (100,  -8) *\dir<4pt>{*} = "E4",
		\ar@{-} "E4"; (96,  0) *\dir<4pt>{*} = "F4",
		\ar@{-} "A4"; "F4",
		\ar@{-} "A4"; "C4",
		\ar@{-} "A4"; "D4",
		\ar@{-} "B4"; "F4",
		\ar@{} (0,  0); (114,  0) *\dir<4pt>{*} = "A5", 
		\ar@{-} "A5"; (110,  -8) *\dir<4pt>{*} = "B5", 
		\ar@{-} "B5"; (114,  -16) *\dir<4pt>{*} = "C5",
		\ar@{-} "C5"; (122,  -16) *\dir<4pt>{*} = "D5",
		\ar@{-} "D5"; (126,  -8) *\dir<4pt>{*} = "E5",
		\ar@{-} "E5"; (122,  0) *\dir<4pt>{*} = "F5",
		\ar@{-} "A5"; "F5",
		\ar@{-} "A5"; "D5",
		\ar@{-} "B5"; "D5",
		\ar@{-} "C5"; "E5",
		\ar@{-} "D5"; "F5",
	\end{xy}

\vspace{4mm}

\ \ \ \ \ \ $G_1$ \ \ \ \ \ \ \ \ \ \ \ \ \ \ $G_2$  \ \ \ \ \ \ \ \ \ \ \ \ \ \ \ $G_3$  
\ \ \ \ \ \ \ \ \ \ \ \ \ \ $G_4$  \ \ \ \ \ \ \ \ \ \ \ \ \ \ $G_5$\\

Then we can see that 

\begin{itemize}
	\item $K[\Qc_{G_1}]$ is not Gorenstein since $h(K[\Qc_{G_1}]) = (1, 7, 10, 3)$. 
	\item $K[\Qc_{G_2}]$ is Koszul since $I_{\Qc_{G_2}}$ has a quadratic Gr\"{o}bner bases. 
	\item $\overline{G_3}$ is $C_6$, hence bipartite. 
	\item $K[\Qc_{G_4}]$ is not Gorenstein since $h(K[\Qc_{G_4}]) = (1, 6, 8, 2)$. 
	\item $K[\Qc_{G_5}]$ is Koszul since $I_{\Qc_{G_5}} = I_{\Qc_{C_5}}$ and 
	$I_{\Qc_{C_5}}$ has a quadratic Gr\"{o}bner bases. 
\end{itemize}
Therefore we have the desired conclusion. 
\end{proof}
 
For each integer $k \ge 3$, the complement of a odd cycle $C_{2k + 1}$, denoted by 
$\overline{C_{2k + 1}}$, is neither comparability nor almost bipartite. 
Note that $\overline{C_{2k + 1}}$ is not perfect and 
$S(\overline{C_{2k + 1}}) = \{\emptyset, \{ 1 \}, \{ 2 \}, \ldots, \{ 2k + 1 \}, \{1, 2\}, \{2, 3\},\ldots, \{2k, 2k + 1\}, \{1, 2k + 1\}  \}$. 

Let $K[Y] = K[y_{\emptyset}, y_{\{1\}}, y_{\{2\}}, \ldots, y_{\{2k + 1\}}, y_{\{1, 2\}}, y_{\{2, 3\}}, \ldots, y_{\{2k, 2k + 1\}}, y_{\{1, 2k + 1\} }]$. 
Now we study the toric ring
\[ 
K[\Qc_{\overline{C_{2k + 1}}}] \cong \frac{K[Y]}{I_{\Qc_{\overline{C_{2k + 1}}}}}. 
\] 

$\ $

\begin{Proposition}
\normalfont
We have the following:
	\begin{enumerate}
		\item $K[\Qc_{\overline{C_{2k + 1}}}]$ is quadratic Cohen-Macaulay 
		for all $k \ge 3$. 
		\item $K[\Qc_{\overline{C_{2k + 1}}}]$ is not Gorenstein for all $k \ge 4$. 
		\item $K[\Qc_{\overline{C_{7}}}]$ is Gorenstein. 
		\item $I_{\Qc_{\overline{C_{2k + 1}}}}$ possesses no quadratic Gr\"{o}bner bases
		for all $k \ge 3$. 
	\end{enumerate}
\end{Proposition}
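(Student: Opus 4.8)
The plan is to treat the four assertions essentially separately, using throughout the explicit fact that $S(\overline{C_{2k+1}}) = \{\emptyset\}\cup\{\{1\},\dots,\{2k+1\}\}\cup E(C_{2k+1})$, so that $K[\Qc_{\overline{C_{2k+1}}}]$ is generated by $t$, the $tx_i$, and the $tx_ix_{i+1}$ ($i\in\ZZ/(2k+1)$), and that the facets of $\Qc_{\overline{C_{2k+1}}}$ are $x_i\ge 0$, the clique inequalities $\sum_{i\in W}x_i\le 1$ over maximal cliques $W$ of $\overline{C_{2k+1}}$ (= maximal independent sets of $C_{2k+1}$), and the single odd-antihole inequality $\sum_i x_i\le 2$. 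For (1), I would first enumerate the degree-$2$ binomials of $I_{\Qc_{\overline{C_{2k+1}}}}$: they are exactly $q_i:=y_\emptyset y_{\{i,i+1\}}-y_{\{i\}}y_{\{i+1\}}$ and $r_i:=y_{\{i\}}y_{\{i+1,i+2\}}-y_{\{i+2\}}y_{\{i,i+1\}}$ (indices mod $2k+1$), and a Hilbert-function count identifying $\dim_K(I_{\Qc_{\overline{C_{2k+1}}}})_2$ with the span of these $2(2k+1)$ linearly independent binomials; that they generate the whole ideal I would deduce by a direct normal-form argument or by invoking the structure of $\overline{C_{2k+1}}$ as a "near almost bipartite" graph, building on \cite{EN}. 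Cohen--Macaulayness I would obtain from normality: checking the integer decomposition property against the facet description above (equivalently exhibiting a regular unimodular triangulation), and then applying Hochster's theorem \cite{Hochster}.

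For (2) and (3) I would use the Danilov--Stanley description of the canonical module (see \cite{Stanley}): since $K[\Qc_{\overline{C_{2k+1}}}]$ is a normal Cohen--Macaulay toric ring, it is Gorenstein iff the interior module $\omega=\bigoplus_c\big(\operatorname{int}(c\Qc_{\overline{C_{2k+1}}})\cap\ZZ^{2k+1}\big)$ is principal. The facet description shows that the smallest dilate with an interior lattice point is the $(k+1)$st, with unique such point $\mathbf 1:=(1,\dots,1)$, so Gorensteinness is equivalent to $p-\mathbf 1\in (c-k-1)\Qc_{\overline{C_{2k+1}}}$ for every $c$ and every interior lattice point $p$ of $c\Qc_{\overline{C_{2k+1}}}$; running through the facets, the only non-automatic condition is $\sum_{i\in W}(p_i-1)\le c-k-1$ for maximal cliques $W$, and since interiority only yields $\sum_{i\in W}p_i\le c-1$ this holds for all $p$ exactly when every maximal clique $W$ has $|W|=k$. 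Every maximal independent set of $C_7$ has size $3=k$, which gives (3). For $k\ge4$, $C_{2k+1}$ has a maximal independent set $W_0$ with $2\le |W_0|=:m\le k-1$ (e.g. a minimum maximal independent set, of size $\lceil(2k+1)/3\rceil$); taking $c=k+m$ and $p=\mathbf 1+\rho(W_0)$, maximality of $W_0$ gives $|W\cap W_0|\le m-1$ for $W\ne W_0$, which is precisely what makes $p$ interior to $c\Qc_{\overline{C_{2k+1}}}$, whereas $p-\mathbf 1=\rho(W_0)$ violates $\sum_{i\in W_0}x_i\le c-k-1$; hence $\omega$ is not principal and (2) follows.

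For (4) I would argue by contradiction: if $I_{\Qc_{\overline{C_{2k+1}}}}$ had a quadratic Gr\"obner basis for a term order $<$, it would be $\{q_i,r_i\}$ with each binomial oriented by $<$, and $\operatorname{in}_<(I_{\Qc_{\overline{C_{2k+1}}}})=\langle\operatorname{in}_<(q_i),\operatorname{in}_<(r_i)\rangle$. Two observations would then collide. First, the $r_i$ cannot be "coherently" oriented: $\sum_i\big(y_{\{i\}}y_{\{i+1,i+2\}}-y_{\{i+2\}}y_{\{i,i+1\}}\big)=0$ as exponent vectors, so no term order makes $\operatorname{in}_<(r_i)$ equal to the first monomial for all $i$, nor to the second for all $i$. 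Second, for each $v\in\ZZ/(2k+1)$ there is a \emph{primitive} binomial of degree $k+1$ built from the two size-$k$ matchings of $C_{2k+1}$ missing the consecutive vertices $v$ and $v+1$; its two monomials contain no $y_\emptyset$ and at most one singleton variable, so the only way its larger monomial can lie in $\langle\operatorname{in}_<(q_i),\operatorname{in}_<(r_i)\rangle$ is for a prescribed one of two adjacent $r_i$'s to be oriented a prescribed way. Letting $v$ run around the cycle, these $2k+1$ disjunctive forced orientations are incompatible with the coherence obstruction, so $\operatorname{in}_<(I_{\Qc_{\overline{C_{2k+1}}}})$ must carry a minimal generator of degree $\ge3$ — contradiction.

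The main obstacle is (4): showing the ring is quadratic yet has \emph{no} quadratic Gr\"obner basis for every $k\ge3$ demands a genuinely global argument that exploits the odd/cyclic structure rather than any single low-degree relation, and turning the last "incompatibility" step into a clean proof (uniform in $k$) is the delicate point; for the case $k=3$ it can also be obtained from the Macaulay2 check \cite{Macaulay2} that $K[\Qc_{\overline{C_7}}]$ is not Koszul together with Fr\"oberg's theorem \cite{Fr}.
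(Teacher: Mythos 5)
Your parts (2) and (3) take a genuinely different route from the paper: you work with the Danilov--Stanley canonical module of the normal toric ring and reduce Gorensteinness to the purely combinatorial condition that every maximal independent set of $C_{2k+1}$ has size $k$, which is clean, uniform in $k$, and avoids any Gr\"obner computation (your witness $p=\mathbf 1+\rho(W_0)$ for a small maximal independent set $W_0$ checks out against the odd-antihole facet description, which you do need to justify, e.g.\ by Padberg's near-perfectness of odd antiholes). The paper instead passes to an Artinian reduction $K[y_{\{1\}},\dots,y_{\{2k+1\}}]/I_{2k+1}$ by an explicit linear regular sequence, exhibits two socle elements for $k\ge4$, and for $k=3$ computes the $h$-vector $(1,7,14,7,1)$ via an initial ideal and invokes Stanley's symmetry criterion. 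Your approach buys conceptual clarity and a characterization; the paper's buys elementary verifiability. For (1) both you and the paper ultimately lean on an external source for generation in degree $2$ (the paper cites \cite[Theorem 3.2]{MOS}); your ``direct normal-form argument or \cite{EN}'' is left unexecuted, so that step is a citation-level IOU rather than a proof.

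The genuine gap is in (4). Your ``first observation'' --- that $\prod_i y_{\{i\}}y_{\{i+1,i+2\}}=\prod_i y_{\{i+2\}}y_{\{i,i+1\}}$, so no term order orients all the $r_i$ the same way --- is correct and is in fact exactly the paper's final contradiction. What is missing is the forcing mechanism that makes all $2k+1$ orientations coherent in the first place. Your proposed mechanism via primitive degree-$(k+1)$ binomials built from matchings is not verified: you do not show that the larger monomial of such a binomial is divisible \emph{only} by the prescribed quadratic initial terms (one must rule out divisibility by initial terms of the $q_i$ and by standard quadratic monomials), and you concede that making the ``incompatibility'' step uniform in $k$ is open. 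The paper closes this with a much smaller tool: the cubic binomials $y_{\{i+4\}}y_{\{i,i+1\}}y_{\{i+2,i+3\}}-y_{\{i\}}y_{\{i+1,i+2\}}y_{\{i+3,i+4\}}\in I_{\Qc_{\overline{C_{2k+1}}}}$. Given $y_{\{i\}}y_{\{i+1,i+2\}}<y_{\{i+2\}}y_{\{i,i+1\}}$, one checks that every degree-$2$ divisor of the smaller cubic monomial other than $y_{\{i\}}y_{\{i+1,i+2\}}$ is a standard monomial (its image has a unique factorization into stable sets), so the larger cubic monomial must lie in the quadratic initial ideal, and its only non-standard degree-$2$ divisor is $y_{\{i+4\}}y_{\{i+2,i+3\}}$; this forces $y_{\{i+2\}}y_{\{i+3,i+4\}}<y_{\{i+4\}}y_{\{i+2,i+3\}}$. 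Stepping $i\mapsto i+2$ covers all residues mod $2k+1$ because $2k+1$ is odd, so all $r_i$ are coherently oriented, contradicting your product identity. Without such a divisibility analysis your argument for (4) does not go through for any $k$, and the Macaulay2 fallback only covers $k=3$ (and, strictly speaking, non-Koszulness of $K[\Qc_{\overline{C_7}}]$ rules out a quadratic Gr\"obner basis only via Fr\"oberg, as you note, whereas the paper's (4) is independent of the computer check).
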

\begin{proof}
	\begin{enumerate}
		\item First, by \cite[Theorem 2.1]{MOS}, we have that 
		$K[\Qc_{\overline{C_{2k + 1}}}]$ is normal. 
		Hence $K[\Qc_{\overline{C_{2k + 1}}}]$ is Cohen-Macaulay. 
		Moreover, by \cite[Theorem 3.2]{MOS}, we have that 
		the toric ideal $I_{\Qc_{\overline{C_{2k + 1}}}}$ is generated by the following 
		$4k + 2$ binomials: \\
			\begin{itemize}
				\item $y_{\{i\}} y_{\{i + 1\}} - y_{\emptyset} y_{\{i, i + 1\}}$\ \ \  $(1 \le i \le 2k)$;
				\item $y_{\{1\}} y_{\{2k + 1\}} - y_{\emptyset} y_{\{1, 2k + 1\}}$; 
				\item $y_{\{i\}} y_{\{i + 1, i + 2\}} - y_{\{i + 2\}} y_{\{i, i + 1\}}$\ \ \  $(1 \le i \le 2k - 1)$;
				\item $y_{\{2k\}} y_{\{1, 2k + 1\}} - y_{\{1\}} y_{\{2k, 2k + 1\}}$, $y_{\{2k + 1\}} y_{\{1, 2\}} - y_{\{2\}} y_{\{1, 2k + 1\}}$. \\
			\end{itemize}
		Hence $K[\Qc_{\overline{C_{2k + 1}}}]$ is quadratic. 
		Therefore $K[\Qc_{\overline{C_{2k + 1}}}]$ is quadratic Cohen-Macaulay. 
		\item 
		By (1), $K[\Qc_{\overline{C_{2k + 1}}}] \cong K[Y] / I_{\Qc_{\overline{C_{2k + 1}}}}$ is Cohen-Macaulay with 
		$\dim K[\Qc_{\overline{C_{2k + 1}}}] = 2k + 2$. 
		We note that 
		${\bf y} = y_{\emptyset}, y_{\{1\}} - y_{\{2, 3\}}, y_{\{2\}} - y_{\{3, 4\}},$ 
		$\ldots, y_{\{2k - 1\}} - y_{\{2k, 2k + 1\}},$ $y_{\{2k\}} - y_{\{1, 2k + 1\}}, y_{\{2k + 1\}} - y_{\{1, 2\}}$ 
		is a regular sequence of $K[Y] / I_{\Qc_{\overline{C_{2k + 1}}}}$. 
		Then we have that 
		\[
		\frac{K[Y]}{I_{\Qc_{\overline{C_{2k + 1}}}} + ({\bf y})} \cong \frac{K[y_{\{1\}}, y_{\{2\}}, \ldots, y_{\{2k + 1\}}]}{I_{2k + 1}}  
 		\]
		is a artinian quadratic Cohen-Macaulay ring, where $I_{2k + 1}$ is generated by the followings: \\
		\begin{itemize}
			\item $y_{\{i\}} y_{\{i + 1\}}$ \ \ \  $(1 \le i \le 2k)$;
			\item $y_{\{1\}} y_{\{2k + 1\}}$; 
			\item $y_{\{i\}}^2 - y_{\{i - 1\}} y_{\{i + 2\}}$\ \ \  $(2 \le i \le 2k - 1)$;
			\item $y_{\{1\}}^2 - y_{\{3\}} y_{\{2k + 1\}}, y_{\{2k\}}^2 - y_{\{1\}} y_{\{2k - 1\}}$, $y_{\{2k + 1\}}^2 - y_{\{2\}} y_{\{2k\}}$. \\
		\end{itemize}
		Assume $k \ge 4$. 
		Then both $y_{\{2k + 1\}}^2 \cdot \prod_{i = 1}^{k - 1} y_{\{2i\}}$ and 
		\begin{eqnarray*}
		\left\{ \begin{array}{ll}
		\displaystyle \prod_{i = 1}^{\frac{2k + 1}{3}} y_{\{3i\}} & (k \equiv 1 \ {\rm mod} \ 3),  \\
		\displaystyle y_{\{2k + 1\}} \cdot \prod_{i = 1}^{\frac{2k - 1}{3}} y_{\{3i\}} & (k \equiv 2 \ {\rm mod} \ 3),  \\
		\displaystyle y_{\{2k\}}^2 \cdot \prod_{i = 1}^{\frac{2k - 3}{3}} y_{\{3i\}} & (k \equiv 0 \ {\rm mod} \ 3),  \\
		\end{array}
		\right.
		\end{eqnarray*}
		$\ $
		
		\noindent are socle elements of $K[y_{\{1\}}, y_{\{2\}}, \ldots, y_{\{2k + 1\}}] / I_{2k + 1}$, 
		hence it is not Gorenstein. 
		Therefore $K[\Qc_{\overline{C_{2k + 1}}}]$ is not Gorenstein for all $k \ge 4$. 
		\item 
		By the proof of (2), we have 
		\[
		\frac{K[Y]}{I_{\Qc_{\overline{C_{7}}}} + ({\bf y})} \cong \frac{K[y_{\{1\}}, y_{\{2\}}, \ldots, y_{\{7\}}]}{I_7}. 
 		\]
		
		Let $<_{\rm rev}$ be the reverse lexicographic order on $K[y_{\{1\}}, y_{\{2\}}, \ldots, y_{\{7\}}]$ 
		induced by the ordering $y_{\{1\}} < y_{\{2\}} < \cdots < y_{\{7\}}$. 
		Then the initial ideal ${\rm in}_{<_{\rm rev}}(I_7)$ is generated by the following 
		monomials: 
		\[
		(y_{\{1\}}y_{\{2\}}, y_{\{2\}}y_{\{3\}}, y_{\{3\}}y_{\{4\}}, y_{\{4\}}y_{\{5\}}, y_{\{5\}}y_{\{6\}}, 
		y_{\{6\}}y_{\{7\}}, y_{\{1\}}y_{\{7\}},  
		\]
		\[
		y_{\{1\}}^3, y_{\{2\}}^2, y_{\{3\}}^2, y_{\{4\}}^2, 
		y_{\{5\}}^2, y_{\{6\}}^2, y_{\{7\}}^2, y_{\{3\}}y_{\{7\}}, y_{\{1\}}^2 y_{\{4\}}, y_{\{1\}}^2 y_{\{6\}}, 
		y_{\{2\}}y_{\{5\}}y_{\{7\}}). 
		\]
		From this, we can compute that the Hilbert series of $\displaystyle \frac{K[y_{\{1\}}, y_{\{2\}}, \ldots, y_{\{7\}}]}{{\rm in}_{<_{\rm rev}}(I_7)}$ is $1 + 7t + 14t^2 + 7t^3 + t^4$. 
		Hence $h(K[\Qc_{\overline{C_{7}}}]) = (1, 7, 14, 7, 1)$, therefore it is Gorenstein. 
		\item Assume that there exists a monomial order $<$ on $K[Y]$ such that 
		the Gr\"{o}bner bases of $I_{\Qc_{\overline{C_{2k + 1}}}}$ with respect to $<$ 
		is quadratic. \\
		We may assume that $y_{\{1\}}y_{\{2, 3\}} < y_{\{3\}}y_{\{1, 2\}}$. 
		Then $y_{\{3\}}y_{\{4, 5\}} < y_{\{5\}}y_{\{3, 4\}}$ since 
		$y_{\{5\}}y_{\{1, 2\}}y_{\{3, 4\}} - y_{\{1\}}y_{\{2, 3\}}y_{\{4, 5\}} \in I_{\Qc_{\overline{C_{2k + 1}}}}$ and its initial monomial is $y_{\{5\}}y_{\{1, 2\}}y_{\{3, 4\}}$. 
		Since $y_{\{7\}}y_{\{3, 4\}}y_{\{5, 6\}} - y_{\{3\}}y_{\{4, 5\}}y_{\{6, 7\}} \in I_{\Qc_{\overline{C_{2k + 1}}}}$ and its initial monomial is $y_{\{7\}}y_{\{3, 4\}}y_{\{5, 6\}}$, 
		 we have $y_{\{5\}}y_{\{6, 7\}} < y_{\{7\}}y_{\{5, 6\}}$. 
		 By repeating this argument, we have 
		 \begin{center}
		 $y_{\{1\}}y_{\{2, 3\}} < y_{\{3\}}y_{\{1, 2\}}, $
		
		 $y_{\{3\}}y_{\{4, 5\}} < y_{\{5\}}y_{\{3, 4\}}, $
		 
		 $\vdots$
		 
		 $y_{\{2k - 1\}}y_{\{2k, 2k + 1\}} < y_{\{2k + 1\}}y_{\{2k - 1, 2k\}}$, 
		 
		 $y_{\{2k + 1\}}y_{\{1, 2\}} < y_{\{2\}}y_{\{1, 2k + 1\}}$, 
		 
		 $y_{\{2\}}y_{\{3, 4\}} < y_{\{4\}}y_{\{2, 3\}}$, 
		 
		 $y_{\{4\}}y_{\{5, 6\}} < y_{\{6\}}y_{\{4, 5\}}$, 
		 
		 $\vdots$
		 
		 $y_{\{2k - 2\}}y_{\{2k - 1, 2k\}} < y_{\{2k\}}y_{\{2k - 2, 2k - 1\}}$, 
		 
		 $y_{\{2k\}}y_{\{1, 2k + 1\}} < y_{\{1\}}y_{\{2k, 2k + 1\}}$. 
		 \end{center}
		 These inequalities induce a contradiction. 
		 Hence we have the desired conclusion. 
		 \qedhere
	\end{enumerate} 
\end{proof}

$\ $

We can check that $R = K[\Qc_{\overline{C_{7}}}]$ is not Koszul to check 
$\beta_{34}^{R}(K) = 1 \neq 0$
by using Macaulay2.  
Hence we have 

\begin{Corollary}
\normalfont
The toric ring $K[\Qc_{\overline{C_{7}}}]$ is non-Koszul quadratic Gorenstein. 
\end{Corollary}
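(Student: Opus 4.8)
The plan is to assemble the Corollary from the three ingredients already laid out in the excerpt: quadraticity and the Gorenstein property come from Proposition~1.3, and non-Koszulness comes from a direct homological computation. Since Proposition~1.3(1) asserts that $K[\Qc_{\overline{C_7}}]$ is quadratic Cohen--Macaulay and Proposition~1.3(3) asserts that it is Gorenstein (via the explicit $h$-vector $(1,7,14,7,1)$), the only remaining task is to certify that $R=K[\Qc_{\overline{C_7}}]$ fails to be Koszul.

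First I would recall the definition: $R$ is Koszul if and only if $\beta_{ij}^R(K)=0$ for all $i\neq j$; equivalently it suffices to exhibit a single off-diagonal nonzero graded Betti number of the residue field. So the strategy is simply to compute $\beta_{34}^R(K)$ and show it equals $1$. Concretely, one presents $R$ as $K[Y]/I_{\Qc_{\overline{C_7}}}$ using the $30$ binomial generators from Proposition~1.3(1) (with $k=3$), feeds this presentation to Macaulay2, and asks for the graded Betti table of the module $K=R/R_{+}$ over $R$ up to homological degree $3$. The output shows an entry in homological degree $3$ and internal degree $4$, i.e. $\beta_{34}^R(K)=1$, which is off the diagonal (where one would need internal degree $3$). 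This immediately violates the linearity condition, so $R$ is not Koszul.

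One subtlety worth flagging is that the minimal free resolution of $K$ over $R$ is typically infinite, so the Betti numbers must be read off a truncated computation; this is legitimate because $\beta_{ij}^R(K)$ in a fixed homological degree $i$ is determined by finitely many steps of the resolution and is insensitive to later truncation. The reference \cite[p.~289]{Four} and the Remark in the introduction explain exactly how to carry out such a finite check in Macaulay2 (computing the resolution of $R/R_{+}$ over $R$ out to the needed length and inspecting the Betti table), so no issue of termination arises.

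Putting the pieces together: by Proposition~1.3(1) the ring $K[\Qc_{\overline{C_7}}]$ is quadratic (its defining ideal is generated by the listed binomials of degree~$2$), by Proposition~1.3(3) it is Gorenstein, and by the Macaulay2 computation $\beta_{34}^R(K)=1\neq 0$ so it is not Koszul. Hence $K[\Qc_{\overline{C_7}}]$ is a non-Koszul quadratic Gorenstein toric ring, as claimed. I expect the only real ``obstacle'' to be presentational rather than mathematical: one must trust (or reproduce) the machine computation of the Betti number, since there is no slick conceptual argument here — the whole point of the example is that Koszulness is genuinely subtle for quadratic Gorenstein rings of socle degree~$\ge 4$, and a hands-on homological certificate is the natural way to detect its failure.
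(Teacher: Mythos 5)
Your proposal matches the paper's argument exactly: quadraticity comes from Proposition~1.3(1), the Gorenstein property from Proposition~1.3(3) via the $h$-vector $(1,7,14,7,1)$, and non-Koszulness from the Macaulay2 computation showing $\beta_{34}^{R}(K)=1\neq 0$. There is nothing to add; this is precisely the paper's reasoning.
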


We can construct an infinite family of non-Koszul quadratic Gorenstein toric rings 
by using stable set polytopes. 

\begin{Proposition}
\normalfont
Let $k \ge 1$ be an integer. 
Let $G$ be a graph on $[2k + 7]$ such that $\overline{G} = C_7 \cup K_2 \cup \cdots \cup K_2$ 
and the labeling of vertices is as follows: 

$\ $

	\begin{xy}
		\ar@{} (0,  0); (20,  0) *++!R{5} *\dir<4pt>{*} = "A", 
		\ar@{-} "A"; (36,  0) *++!L{4} *\dir<4pt>{*} = "B", 
		\ar@{} "B"; (40, 22) *++!L{2} *\dir<4pt>{*} = "C", 
		\ar@{-} "C"; (28, 30) *++!D{1} *\dir<4pt>{*} = "D",
		\ar@{} "A"; (16,  22) *++!R{7} *\dir<4pt>{*} = "E",
		\ar@{-} "D"; "E"; 
		\ar@{-} "C"; (44, 11) *++!L{3} *\dir<4pt>{*} = "F",
		\ar@{-} "F"; "B"; 
		\ar@{-} "A"; (12, 11) *++!R{6} *\dir<4pt>{*} = "G",
		\ar@{-} "G";"E";
		\ar@{} (0,  0); (60,  0) *++!U{9} *\dir<4pt>{*} = "H",
		\ar@{-} "H"; (60, 30) *++!D{8} *\dir<4pt>{*} = "I";
		\ar@{} "I"; (80, 15) *{\cdots};
		\ar@{} (0,  0); (100,  0) *++!U{2k + 7} *\dir<4pt>{*} = "J",
		\ar@{-} "J"; (100, 30) *++!D{2k + 6} *\dir<4pt>{*}
	\end{xy}

\noindent Then we have

\begin{enumerate}
	\item $K[\Qc_{G}]$ is quadraic Gorenstein such that
	\[ 
	H_{K[\Qc_{G}]}(t) = (1 + 7t + 14t^2 + 7t^3 + t^4)(1 + t)^k / (1 - t)^{2k + 8}.
	\]
	\item $K[\Qc_{G}]$ is not Koszul. 
\end{enumerate}
\end{Proposition}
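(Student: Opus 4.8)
The plan is to exploit the fact that $\overline{G}$ is a disjoint union, so that $G$ is a join: $G = \overline{C_7} \ast \overline{K_2} \ast \cdots \ast \overline{K_2}$ with $k$ copies of $\overline{K_2}$ (the two-vertex graph with no edge), where $\ast$ is the graph join. The engine of the proof is the graded ring isomorphism
\[
K[\Qc_{G_1 \ast G_2}] \;\cong\; K[\Qc_{G_1}] \otimes_{K[t]} K[\Qc_{G_2}],
\]
for graphs $G_1, G_2$ on disjoint vertex sets, where $K[t]$ maps into each factor by $t \mapsto y_{\emptyset}$. To prove it I would first observe that a stable set of $G_1 \ast G_2$ cannot meet both vertex sets, so $S(G_1 \ast G_2) = S(G_1) \cup S(G_2)$ with intersection $\{\emptyset\}$; multiplication inside $K[x_{V(G_1)}, x_{V(G_2)}, t]$ then gives a surjective graded homomorphism from the right-hand side onto $K[\Qc_{G_1 \ast G_2}]$. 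For injectivity I would use that each $K[\Qc_{G_i}]$ is a \emph{free} $K[t]$-module: writing $K[\Qc_{G_i}]$ as the affine semigroup ring $K[C_i]$, for every $x$-exponent vector $\alpha$ that occurs the set of admissible $t$-degrees is $\{d : d \ge d_{\alpha}\}$, so the monomials $X^{\alpha} t^{d_{\alpha}}$ form a $K[t]$-basis; the tensor product is then $K[t]$-free on the products of such basis elements, and these map to pairwise distinct monomials in $K[x_{V(G_1)}, x_{V(G_2)}, t]$, which are there $K[t]$-linearly independent. Iterating, $K[\Qc_G] \cong K[\Qc_{\overline{C_7}}] \otimes_{K[t]} K[\Qc_{\overline{K_2}}] \otimes_{K[t]} \cdots \otimes_{K[t]} K[\Qc_{\overline{K_2}}]$.

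For part (1): in the polynomial ring $K[Y]$ with one variable per stable set of $G$, the defining ideal of $K[\Qc_G]$ is the sum of the defining ideals of the factors, namely $I_{\Qc_{\overline{C_7}}}$ (quadratic, by Proposition 1.4(1)) together with one quadric $y_{\emptyset} y_{\{u_j, v_j\}} - y_{\{u_j\}} y_{\{v_j\}}$ for each copy of $\overline{K_2}$; hence $K[\Qc_G]$ is quadratic. Next, $y_{\emptyset}$ is a linear nonzerodivisor (the ring is a domain) and
\[
K[\Qc_G]/(y_{\emptyset}) \;\cong\; \bigl(K[\Qc_{\overline{C_7}}]/(y_{\emptyset})\bigr) \otimes_K \bigl(K[\Qc_{\overline{K_2}}]/(y_{\emptyset})\bigr)^{\otimes k},
\]
a tensor product over $K$ of Cohen--Macaulay algebras, so $K[\Qc_G]$ is Cohen--Macaulay. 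Taking the linear regular sequence that begins with $y_{\emptyset}$, continues with the remaining terms of the sequence $\mathbf{y}$ of Proposition 1.4(3) in the $\overline{C_7}$-variables, and ends with $y_{\{u_j\}} - y_{\{u_j, v_j\}}$ and $y_{\{u_j\}} - y_{\{v_j\}}$ on the $j$-th copy of $\overline{K_2}$ (each term remains a nonzerodivisor because nonzerodivisors on a tensor factor survive tensoring over $K$), the Artinian reduction of $K[\Qc_G]$ is
\[
\bigl(K[y_{\{1\}}, \dots, y_{\{7\}}]/I_7\bigr) \otimes_K \bigl(K[z]/(z^2)\bigr)^{\otimes k}.
\]
By Proposition 1.4(3) the first factor is Artinian Gorenstein with Hilbert series $1 + 7t + 14t^2 + 7t^3 + t^4$, and $K[z]/(z^2)$ is Artinian Gorenstein with Hilbert series $1 + t$; a tensor product over $K$ of Artinian Gorenstein algebras is Artinian Gorenstein (its socle is the tensor product of the socles). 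Since a Cohen--Macaulay ring whose Artinian reduction is Gorenstein is itself Gorenstein, $K[\Qc_G]$ is Gorenstein, and multiplying the Hilbert series of the Artinian reduction by $(1-t)^{-\dim K[\Qc_G]} = (1-t)^{-(2k+8)}$ (the dimension being $2k+8$ since $\Qc_G$ is full-dimensional) gives the asserted $H_{K[\Qc_G]}(t)$.

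For part (2) I would invoke two standard facts: quotienting by a regular sequence of $c$ linear forms multiplies the graded Poincar\'e series of the residue field by $(1+st)^c$, so it preserves and reflects Koszulness; and a tensor product over $K$ of standard graded algebras is Koszul if and only if every factor is. Together with the displayed Artinian reduction, $K[\Qc_G]$ is Koszul if and only if $K[y_{\{1\}}, \dots, y_{\{7\}}]/I_7$ is; but the latter is an Artinian reduction of $K[\Qc_{\overline{C_7}}]$, which is non-Koszul by the Corollary, so $K[y_{\{1\}},\dots,y_{\{7\}}]/I_7$ and hence $K[\Qc_G]$ are non-Koszul. The step that I expect to require the most care is the isomorphism $K[\Qc_{G_1 \ast G_2}] \cong K[\Qc_{G_1}] \otimes_{K[t]} K[\Qc_{G_2}]$ --- in particular the freeness of $K[\Qc_{G_i}]$ over $K[t]$ behind the injectivity argument, and the compatibility of reduction modulo $(t)$ with the tensor product over $K[t]$; after that, the rest is routine assembly of standard properties of Cohen--Macaulay and Koszul algebras under $\otimes_K$ and linear Artinian reductions, plus the already-established facts about $K[\Qc_{\overline{C_7}}]$.
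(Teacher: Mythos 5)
Your proof is correct, and it is worth recording where it departs from the paper's. For part (1) the paper simply cites \cite[Theorem 3.2]{MOS} for the quadratic generators of $I_{\Qc_G}$ and then performs the same Artinian reduction to $\bigl(K[y_{\{1\}},\dots,y_{\{7\}}]/I_7\bigr)\otimes_K\bigl(K[z]/(z^2)\bigr)^{\otimes k}$ that you reach; you instead derive the presentation from scratch via the join isomorphism $K[\Qc_{G_1\ast G_2}]\cong K[\Qc_{G_1}]\otimes_{K[t]}K[\Qc_{G_2}]$, and your injectivity argument (freeness over $K[t]$ because the fibre of the semigroup over each $x$-exponent is an interval $[d_\alpha,\infty)$, thanks to the generator $({\bf 0},1)$) is sound. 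This buys self-containedness at the cost of re-proving a special case of the cited result; you are also more explicit than the paper about why the reduction is Gorenstein (socle of a tensor product) where the paper leaves this to the symmetric $h$-vector together with Stanley's theorem for Cohen--Macaulay domains. For part (2) the routes genuinely differ: the paper observes that $K[\Qc_{\overline{C_7}}]$ is a combinatorial pure subring of $K[\Qc_G]$ and invokes \cite[Proposition 1.3]{CPS}, whereas you transfer non-Koszulness through the linear Artinian reduction (the graded Poincar\'e series of $K$ changes by $(1+st)^c$ under a linear regular sequence) and the Backelin--Fr\"oberg fact that $\otimes_K$ preserves and reflects Koszulness. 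Both mechanisms are standard and ultimately rest on algebra retract/regular sequence arguments; the paper's is shorter given the reference, while yours avoids the combinatorial pure subring machinery and makes visible that the $K_2$ factors only contribute Koszul complete-intersection factors $K[z]/(z^2)$.
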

\begin{proof}
	\begin{enumerate}
		\item 
		%By the same argument of Proposition 1.3(1), we have that $K[\Qc_{G}]$ is 
		%Cohen-Macaulay. 
		By \cite[Theorem 3.2]{MOS}, we have that the toric ideal $I_{\Qc_G}$ is 
		generated by the following binomials: 
		\begin{itemize}
			\item $y_{\{i\}} y_{\{i + 1\}} - y_{\emptyset} y_{\{i, i + 1\}}$\ \ \  $(1 \le i \le 6)$;
			\item $y_{\{1\}} y_{\{7\}} - y_{\emptyset} y_{\{1, 7\}}$; 
			\item $y_{\{i\}} y_{\{i + 1, i + 2\}} - y_{\{i + 2\}} y_{\{i, i + 1\}}$\ \ \  $(1 \le i \le 5)$;
			\item $y_{\{6\}} y_{\{1, 7\}} - y_{\{1\}} y_{\{6, 7\}}$, $y_{\{7\}} y_{\{1, 2\}} - y_{\{2\}} y_{\{1, 7\}}$; 
			\item $y_{\{2i\}} y_{\{2i + 1\}} - y_{\emptyset} y_{\{2i, 2i + 1\}}$\ \ \  $(4 \le i \le k + 3)$. \\
		\end{itemize}
		Let $K[Y] = K[y_{W} \mid W \in S(G)]$. 
		Then $K[\Qc_{G}] \cong K[Y] / I_{\Qc_G}$. 
		Note that 
		${\bf y} = y_{\emptyset}, y_{\{1\}} - y_{\{2, 3\}}, y_{\{2\}} - y_{\{3, 4\}},$ 
		$\ldots, y_{\{5\}} - y_{\{6, 7\}},$ $y_{\{6\}} - y_{\{1, 7\}}, y_{\{7\}} - y_{\{1, 2\}},$ $y_{\{8\}} - y_{\{9\}}, \ldots, y_{\{2k + 6\}} - y_{\{2k + 7\}}, y_{\{8, 9\}}, \ldots, y_{\{2k + 6, 2k + 7\}}$ 
		is a regular sequence of $K[Y] / I_{\Qc_G}$. 
		Hence we have
		\[
		\frac{K[Y]}{I_{\Qc_{G}} + ({\bf y})} 
		\cong \frac{K[y_{\{1\}}, y_{\{2\}}, \ldots, y_{\{7\}}]}{I_7} \otimes_{K} \frac{K[y_{\{2i\}} \mid 4 \le i \le k + 3]}{(y_{\{2i\}}^2 \mid 4 \le i \le k + 3)}. 
 		\]
		Thus  the Hilbert series of $K[Y] / I_{\Qc_{G}} + ({\bf y})$ is 
		$(1 + 7t + 14t^2 + 7t^3 + t^4)(1 + t)^k$. 
		Therefore we have the desired conclusion.  
		\item $K[\Qc_{\overline{C_{7}}}]$ is a conbinatorial pure subring (see \cite{CPS}) 
		of $K[\Qc_{G}]$. 
		Since $K[\Qc_{\overline{C_{7}}}]$ is not Koszul, 
		hence $K[\Qc_{G}]$ is not Koszul by \cite[Proposition 1.3]{CPS}. 
		\qedhere
	\end{enumerate}
\end{proof}

\section{Questions}

As the end of this note, we present some questions. 

Recall that the $h$-vector of $K[\Qc_{\overline{C_{7}}}]$ is $(1, 7, 14, 7, 1)$. 
Hence the following question is interesting. 

\begin{Question}
Does exist a non-Koszul quadratic Gorenstein algebra $R$ such that 
$h(R) = (1, n_1, n_2, n_1, 1)$ and $n_1 \le 6$ ? 
\end{Question}
Note that, in this case $n_1 \ge 4$ since $R$ is quadratic. 
Since $n_1 = {\rm embdim}\ R - \dim R$ and 
${\rm embdim}\ K[\Qc_{G}] = \#S(G) = 1 + n + \#\{ W \in S(G) \mid \#W \ge 2 \}$ and $\dim K[\Qc_{G}] = n + 1$, if ${\rm embdim}\ K[\Qc_{G}] - \dim K[\Qc_{G}] \le 6$, then 
$\#\{ W \in S(G) \mid \#W \ge 2 \} \le 6$. 
In particular, we have $\alpha(G)  =  2$, where $\alpha(G) := \max\{ \#W \mid W \in S(G) \}$ 
is the {\em stability number} of $G$. 
Since if $G$ is perfect graph with $\alpha(G)  =  2$ then $\overline{G}$ is bipartite, 
In this case $G$ is not perfect. 

Let $G$ be a graph on $[n]$ and with $E(G)$ its edge set. 
The {\em edge ring} of $G$, denoted by  $K[G]$, is defined by
\[
K[G] := K[x_ix_j \mid \{i, j\} \in E(G)] \subset K[x_1, \ldots, x_n]. 
\]

The second question is 

%\begin{Question}
%Let $G$ be a perfect graph. 
%Assume that $K[\Qc_{G}]$ is quadratic Gorenstein. 
%Then is $K[\Qc_{G}]$ Koszul $?$
%\end{Question}

\begin{Question}
Does exist a graph $G$ such that the edge ring $K[G]$ is non-Koszul 
quadratic Gorenstein ?
\end{Question}

In \cite[Theorem 1.2]{notKoszul}, a criterion for the edge ring $K[G]$ of $G$ to be quadratic 
is given. 
Moreover, in \cite{HNOS}, a class of graphs with the property that the toric ideal $I_G$ of 
the edge ring $K[G]$ of $G$ is quadratic but $I_G$ possesses no quadratic Gr\"{o}bner bases 
is studied. 
A graph $G$ is said to be $(*)$-{\em minimal} if $G$ satisfies the above property and every 
induced subgraph $H \subsetneq G$ does not satisfy the property. 
By the computation by using Macaulay2, we have that 
if $G$ is $(*)$-minimal and the edge ring $K[G]$ is non-Koszul quadratic Gorenstein, 
then $n \ge 9$. 

\bigskip

\noindent
{\bf Acknowledgment.}
The author wish to thank Professor Takayuki Hibi for his financial support.


\begin{thebibliography}{99}

\bibitem{seqKoszul}
A.~Aramova, J.~Herzog and T.~Hibi, 
Shellability of semigroup rings, 
{\em Nagoya Math. J.} {\bf 168} (2002), 65--84. 

\bibitem{IKoszul}
S.~Blum, 
Initially Koszul algebras, 
{\em Beitr\"{a}ge Algebra Geom.} {\bf 41} (2000), 455--467. 

\bibitem{CFS}
T.~Chappell, T.~Friedl and R.~Sanyal, 
Two double poset polytopes, 
arXiv:1606.04938. 

\bibitem{CRST}
M.~Chudnovsky, N.~Robertson, P.~Seymour and R.~Thomas, 
The strong perfect graph theorem, 
{\em Ann. of Math. (2)} {\bf 164} (2006), 51--229. 

\bibitem{UKoszul}
A.~Conca, 
Universally Koszul algebras, 
{\em Math. Ann.} {\bf 317} (2000), 329--346. 

\bibitem{CDR}
A.~Conca, E.~De Negri and M.~E.~Rossi, 
Koszul algebras and regularity, 
Commutative algebra, 285--315, Springer, New York, 2013.

\bibitem{CRV}
A.~Conca, M.~E.~Rossi and G.~Valla, 
Gr\"{o}bner flags and Gorenstein algebras, 
{\em Compositio Math.} {\bf 129} (2001), 95--121. 

\bibitem{D}
R.~Diestel, 
Graph Theory, Fourth edition, 
Grad. Texts Math. {\bf 173}, Springer, 2010. 

\bibitem{EN}
A.~Engstr\"{o}m and P.~Nor\'{e}n, 
Ideals of graph homomorphisms, 
{\em Ann. Comb. } {\bf 17} (2013), 71--103. 

\bibitem{Fr}
R.~Fr\"{o}berg, 
Determination of a class of Poincar\'{e} series, 
{\em Math. Scand.} {\bf 37} (1975), 29--39. 

\bibitem{Macaulay2}
D.~R.~Grayson and M.~E.~Stillman, 
Macaulay2, a software system for research in algebraic geometry, 
Available at \texttt{http://www.math.uiuc.edu/Macaulay2/}. 

\bibitem{HHR}
J.~Herzog, T.~Hibi and G.~Restuccia, 
Strongly Koszul algebras, 
{\em Math. Scand.} {\bf 86} (2000), 161--178. 

\bibitem{Hibi1987}
T. Hibi, Distributive lattices, affine semigroup rings and algebras with 
straightening laws, {\it in} ``Commutative Algebra and Combinatorics'' 
(M. Nagata and H. Matsumura, Eds.), Advanced Studies in Pure Math., 
Volume 11, North--Holland, Amsterdam, 1987, pp. 93 -- 109.

\bibitem{Chain}
T.~Hibi and N.~Li, 
Chain polytopes and algebras with straightening laws, 
{\em Acta Math. Vietnam.} {\bf 40} (2015), 447--452. 

\bibitem{HM}
T.~Hibi and K.~Matsuda, 
Quadratic Gr\"{o}bner bases of twinned order polytopes, 
{\em European J. Combin.} {\bf 54} (2016), 187--192. 

\bibitem{HMS}
T.~Hibi, K.~Matsuda and H.~Ohsugi, 
Strongly Koszul edge rings, 
{Acta Math. Vietnam.} {\bf 41} (2016), 69--76. 

\bibitem{HMOS}
T.~Hibi, K.~Matsuda, H.~Ohsugi and K.~Shibata,
Centrally symmetric configurations of order polytopes,
{\em J. Algebra} {\bf 443} (2015), 469--478. 

\bibitem{GF}
T.~Hibi, K.~Matsuda and A.~Tsuchiya, 
Gorenstein Fano polytopes arising from order polytopes and chain polytopes, 
arXiv:1507.03221. 

\bibitem{HMT}
T.~Hibi, K.~Matsuda and A.~Tsuchiya, 
Quadratic Gr\"{o}bner bases arising from partially ordered sets, 
{\em Math. Scand.}, to appear. 

\bibitem{HNOS}
T.~Hibi, K.~Nishiyama, H.~Ohsugi and A.~Shikama, 
Many toric ideals generated by quadratic binomilals possess no 
quadratic Gr\"{o}bner bases, 
{\em J. Algebra} {\bf 408} (2014), 138--146. 

\bibitem{Hochster}
M.~Hochster, 
Rings of invariants of tori, Cohen-Macaulay rings generated by monomials, 
and polytopes, 
{\em Ann. of Math. (2)} {\bf 96} (1972), 318--337. 

\bibitem{Mancini}
F.~Mancini, 
Graph modification problems related to graph classes, 
Dissertation, University of Bergen, 2008. 

%\bibitem{harmony}
%H.~Ohsugi and T.~Hibi,
%Reverse lexicographic squarefree initial ideals and Gorenstein Fano polytopes,
%{J. Commut. Algebra}, to appear. 

\bibitem{Matsuda}
K.~Matsuda, 
Strong Koszulness of toric rings associated with stable set polytopes of trivially
perfect graphs, 
{\em J. Algebra Appl.} {\bf 13} (2014), 1350138 [11 pages]. 

\bibitem{MO}
K.~Matsuda and H.~Ohsugi, 
Reverse lexicographic Gr\"{o}bner bases and strongly Koszul toric rings, 
{\em Math. Scand.}, to appear. 

\bibitem{MOS}
K.~Matsuda, H.~Ohsugi and K.~Shibata, 
Toric rings and ideals of stable set polytopes, 
arXiv:1603.01850. 

\bibitem{CPS}
H.~Ohsugi, J.~Herzog and T.~Hibi, 
Combinatorial pure subrings, 
{\em Osaka J. Math.} {\bf 37} (2000), 745--757. 

\bibitem{notKoszul}
H.~Ohsugi and T.~Hibi, 
Toric ideals generated by quadratic binomials, 
{\em J. Algebra} {\bf 218} (1999), 509--527. 

\bibitem{Gorenstein}
H.~Ohsugi and T.~Hibi, 
Special simplices and Gorenstein toric rings, 
{\em J. Combin. Theory Ser. A} {\bf 113} (2006), 718--725. 

%\bibitem{OHrootsystem}
%H.~Ohsugi and T.~Hibi,
%Quadratic initial ideals of root systems,
%{\em Proc. Amer. Math. Soc.} {\bf 130} (2002), 1913--1922.

%\bibitem{CSC}
%H.~Ohsugi and T.~Hibi,
%Centrally symmetric configurations of integer matrices,
%{\em Nagoya Math. J.} {\bf 216} (2014), 153--170. 

\bibitem{Priddy}
S.~B.~Priddy, 
Koszul resolutions, 
{\em Trans. Amer. Math. Soc.} {\bf 152} (1970), 39--60. 

\bibitem{RS}
J.-E. Roos and B.~Sturmfels, 
A toric ring with irrational Poincar\'{e}-Betti series, 
{\em C. R. Acad. Sci. Paris S\"{e}r. I Math.} {\bf 326} (1998), 141--146. 

\bibitem{Shibata}
K.~Shibata, 
Strong Koszulness of the toric ring associated to a cut ideal, 
{\em Comment. Math. Univ. St. Pauli} {\bf 64} (2015), 71--80. 

%\bibitem{Stanley}
%R.~P.~Stanley,
%Two poset polytopes, 
%{\em Disc. Comput. Geom.} {\bf 1} (1986), 9--23.

\bibitem{Stanley}
R.~P.~Stanley,
Hilbert functions of graded algebras, 
{\em Adv. Math.} {\bf 28} (1978), 57--83.

\bibitem{Sturmfels}
B.~Sturmfels, 
Gr\"{o}bner Bases and Convex Polytopes, 
Amer. Math. Soc., Providence, RI, 1996. 

\bibitem{Four}
B.~Sturmfels, 
Four counterexamples in combinatorial algebraic geometry, 
{\em J. Algebra} {\bf 230} (2000), 282--294. 

 \bibitem{Tate}
J.~Tate, 
Homology of local and Noetherian rings, 
{\em Illinois J. Math.} {\bf 1} (1957), 14--27.   
 
\end{thebibliography}
\end{document}